\documentclass[12pt, twoside]{amsart}
\usepackage{geometry}
\usepackage{tikz}
\usetikzlibrary{matrix}
\allowdisplaybreaks

\RequirePackage[OT1]{fontenc}
\RequirePackage{amsthm,amsmath,amsfonts}
\RequirePackage[numbers]{natbib}
\RequirePackage[colorlinks,citecolor=blue,urlcolor=blue]{hyperref}

\newtheorem{theorem}{Theorem}[section]
   \newtheorem{lemma}[theorem]{Lemma}

   \newtheorem{definition}[theorem]{Definition}

   \newtheorem{remark}[theorem]{Remark}
   
\numberwithin{equation}{section}

    \newcommand \id{1\!\!1}
    \newcommand\graph[1]{[\! [ #1]\! ]}
    
    \newcommand\cro[1]{\langle #1\rangle}
    
    \newcommand\norm[1]{|\!| #1|\!|}
    \newcommand\wh{\widehat}
    
    \newcommand\F{{\mathcal F}}
    \newcommand\G{{\mathcal G}}
    \newcommand\Gz{{\mathcal G}}
    \newcommand\ff{{\mathbb F}}
    
    \renewcommand\gg{{\mathbb G}}
    \newcommand\ggz{{\mathbb G}}
    \renewcommand\P{\mathbb P}
    \newcommand\Q{\mathbb Q}
    \newcommand\R{\mathbb R}
    \newcommand \E {{\mathbb E}}
    \newcommand\ya{{Y^a}}

    \newcommand{\is}{\cdot}
    \newcommand{\Rbrack}{[\![}
    \newcommand{\Lbrack}{]\!]}
    
    \newcommand\ooo[1]{\!\!\!\phantom{I}^o #1}

\newcommand\oo[1]{#1^o}

\begin{document}

\title[Entropy and additional utility of a discrete information]{Entropy and additional utility of a discrete information disclosed progressively in time}

\author[Anna Aksamit]{Anna Aksamit}

\address{Anna Aksamit\\
School of Mathematics and Statistics \\ University of Sydney
\\ Sydney, NSW 2006, Australia}
\email{anna.aksamit@sydney.edu.au}
\keywords{additional information, filtration enlargement, entropy, additional utility, continuous embedding}
\subjclass[2020]{60G07, 60G44,	91G10, 91G40}

\date{\today}

\maketitle

\begin{abstract}
The additional information carried by enlarged filtration and its measurement was studied by several authors. Already Meyer (Sur un theoreme de J. Jacod, 1978) and Yor (Entropie d'une partition, et grossissement initial d'une filtration, 1985), investigated stability of martingale spaces with respect to initial enlargement with atomic sigma-field. We extend these considerations to the case where information is disclosed progressively in time. We define the entropy of such information and we prove that its finiteness is enough for stability of some martingale spaces in progressive setting. Finally we calculate additional logarithmic utility of a discrete information disclosed progressively in time.
\end{abstract}

\section{Introduction}
The additional information carried by an enlarged filtration and its
measurement was studied by several authors. Already in
Meyer \cite{meyer1978theoreme} and Yor \cite{entropieyor}, the question on
stability of martingale spaces with respect to initial enlargement
with atomic $\sigma$-field was asked. 
Here we complete previous studies by giving a connection between progressive enlargement with thin random time, \cite{thin}, and \emph{conditional} entropy of a
partition associated to this time.
Our notion of the entropy of thin random time answers the question posted by Paul-Andr\'e Meyer in \cite{meyer1978theoreme} about additional knowledge associated with a partition and disclosed in progressive manner:

\emph{A similar problem, but perhaps of more interest, consists in measuring the resulting perturbation, in a probabilistic system, not by requiring knowledge at the instant 0, but by adding them progressively to the system.}\footnote{The original French version:\\ \emph{Un probl\`eme voisin, mais plus int\'eressant peut-\^etre, consiste \`a mesurer le bouleversement produit, sur un syst\`eme probabiliste, non pas en for\c cant des connaissances \`a l'instant $0$, mais en les for\c cant progressivement dans le syst\`eme.}}

There are also more recent related studies that generalise and extend earlier results to a variety of settings; see, for instance, \cite{AIS, ADIshannon, ADI:ef}. In contrast, our approach seeks to derive more explicit results and conditions by exploiting additional structure in the problem formulation.

In the case of initial enlargement with a partition $\mathcal C:=(C_n)_n$ -- that is the case where $\mathcal C$ is added to the reference filtration $\mathbb F$ at time zero forming filtration $\mathbb F^{\,\mathcal C}$ -- the additional knowledge is measured by entropy \cite{meyer_general, entropieyor}, namely
$$
H(\mathcal C):=-\sum_n\P(C_n)\log\P(C_n).
$$
In the case of progressive enlargement with a thin random time $\tau$, we
suggest the measurement of additional knowledge by the entropy of a thin random time defined through:
\begin{equation}
\label{entropytau}
H(\tau):=-\sum_n\E\left[\id_{\{\tau=T_n\}}\log \mathbb P(\tau=T_n<\infty|\mathcal F_{T_n})\right] ,
\end{equation}
where $(T_n)$ is an exhausting sequence for $\tau$, i.e., a family of $\mathbb F$-stopping times with disjoint graphs such that $\graph{\tau}\subset \bigcup_n\graph{T_n}$. 
Let us remark that the condition $H(\tau)<\infty$ is weaker than the condition $H(C)<\infty$.

Main results in Theorems \ref{decomposition2}, \ref{entropy2} and \ref{th:logu} 
justify that the notion in \eqref{entropytau} is the right one to measure perturbation in progressive case.

\section{General progressive enlargement with a partition}
\label{s:2}

\subsection{Preliminaries}

Let $(\Omega,\mathcal G,\mathbb P)$ be a complete
probability space, equipped with a filtration $\mathbb{F}:=(\F_t)_{t\geq 0}$ satisfying the usual conditions of completeness and right-continuity, and such that
$\F_\infty:=\bigcup_{t>0}\F_t \subset \mathcal G$.
For any
c\`adl\`ag process $X$ we   denote by $X_-$ the left-continuous
version of $X$, by $\Delta X$ the jump of $X$ and by $X_\infty$
the limit $\lim_{t\to \infty} X_t$ if it exists. The process $X$
is said to be increasing if, for almost all $\omega$, it satisfies
$X_t(\omega)\geq X_s(\omega)$ for all $t\geq s$. A random variable
is said to be positive if it has values in $[0, \infty)$. We
denote by $G\is X$ the stochastic integral of a predictable
process $G$ w.r.t. a semimartingale $X$, when this integral is well defined.

Consider a random time $\tau$, i.e., a $[0, \infty]$-valued
$\G$-measurable random variable. Note that a random time $\tau$ is
not necessarily $\F_\infty$-measurable. For a random time $\tau$,
we denote by $\graph{\tau}:=\{(\omega, t)\subset \Omega\times
\R^+: \tau(\omega)=t\}$ its graph. Let us recall, following
\cite{j}, some useful processes associated with  the pair
$(\ff,\tau)$. For the process
$A:=\id_{\Rbrack\tau,\infty\Rbrack}$, we denote by 
$A^o$ its $\ff$-dual
optional projection (for reader's convenience definition is recalled in  Appendix \ref{projections}). 
We also define two $\ff$-supermartingale
$Z$ as the optional projection of processes $1-A$, i.e.,
\begin{equation*}
Z_t:=\; ^o \Big [\id_{\Rbrack 0, \tau\Rbrack}\Big ]_t=\P(\tau>t|\F_t).
\end{equation*}
Then, the following process is a BMO $\ff$-martingale (see
\cite[Chapitre IV, section 1]{j}):
\begin{equation}
\label{martm}
m=A^o+Z.
\end{equation}

\subsection{Adding a partition}
In the subsection we generalise the progressive enlargement with a thin time studied in \cite{thin} and initial enlargement with a discrete random variable. We study the enlargement with a partition which is added to a reference filtration in an arbitrary way (allows for adding several members
of a partition at the same time).

More precisely, following the Definition 2.1 in \cite{thin}, let $\tau$ be a
thin time with exhausting sequence $(T_n)_{n\geq  0}$, i.e., the graphs satisfy $\tau$ satisfies $\graph{\tau}\subset \bigcup_n\graph{T_n}$ and $\graph{T_n}\cap \graph{T_m}$ for $n\neq m$, and 
let $\xi$ be a
discrete random variable with values in $\mathbb N$. We introduce the following family of sets
$$C_{n}^{k}=\{\tau=T_n<\infty, \xi=k\}\quad n,k\in \mathbb N\quad \textrm{and}\quad C_{0}^{0}=\{\tau=\infty\}$$ 
which form a partition of $\Omega$.
Note that $\{\tau=T_n<\infty\}=\{T_n<\infty\}\cap\,\bigcup_k C_n^k$ and $\sigma(\xi\id_{\{\tau=T_n\}})=\sigma((C^k_n)_k)$. 
Next, we define
\begin{equation}
\label{znk}
z^{n,k}_t:=\P(C_{n,k}|\F_t).
\end{equation}
We are interested by  the progressively enlarged filtration
$\ggz:=(\Gz_t)_{t\geq 0}$, which is the
smallest right-continuous filtration $\gg$ making $\tau$ a stopping time and $\xi$
a $\G_\tau$-measurable random variable, defined through
\begin{equation}
\label{xitau}
\Gz_t:=\bigcap_{s>t}\F_s\lor \sigma\left(\,\xi\id_{\{u\geq \tau\}}: \, u\leq s\right ).
\end{equation}

Similarly to \cite[Lemma 1.5]{thin} we obtain a key lemma for computing conditional expectations w.r.t. members of $\ggz$ in terms of conditional expectations of members of $\ff$, in the following lemma. 
\begin{lemma}
\label{key2}
For any $\G$-measurable integrable random variable $X$ and $s\leq t$ we have
$z^{n,k}_t>0$ and $z^{n,k}_{t-}>0$ for all $t\geq 0$ a.s. on $C_n^k$ for each $n, k$, and 
\[
\E\left[X|\G_t\right]\id_{\{s\geq T_n\}\cap C_n^k}
=\id_{\{s\geq T_n\}\cap C_n^k}\frac{\E\left[X \id_{ C_n^k}|\F_t\right]}{z^{n,k}_t}.
\]
\end{lemma}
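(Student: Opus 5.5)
Two claims need proof: the positivity of $z^{n,k}$ and $z^{n,k}_-$ on $C_n^k$, and the conditional expectation formula. We follow the pattern of \cite[Lemma 1.5]{thin}. Throughout we read the indicator in the displayed identity as $\id_{\{t\geq T_n\}\cap C_n^k}$, with $s\le t$ only needed to nest the events.

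\textbf{Positivity.} The process $z^{n,k}$ is a positive, hence uniformly integrable, $\ff$-martingale closed by $\P(C_n^k|\F_\infty)$. Let $R:=\inf\{t: z^{n,k}_t=0 \text{ or } z^{n,k}_{t-}=0\}$. The standard property of positive supermartingales says that $z^{n,k}$ vanishes on $\Rbrack R,\infty\Lbrack$, and also at $R$ when $z^{n,k}_{R-}=0$. Hence
\[
\P(C_n^k\cap\{R<\infty\})=\E\big[\E[\id_{C_n^k}|\F_\infty]\id_{\{R<\infty\}}\big]=\E\big[z^{n,k}_\infty\id_{\{R<\infty\}}\big]=0,
\]
using the closedness of $z^{n,k}$ on $\{R<\infty\}\in\F_\infty$. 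So $C_n^k\subset\{R=\infty\}$ a.s. The limit $z^{n,k}_\infty$ is also positive on $C_n^k$, because $\E[\id_{C_n^k}\id_{\{z^{n,k}_\infty=0\}}]=\E[z^{n,k}_\infty\id_{\{z^{n,k}_\infty=0\}}]=0$.

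\textbf{Structure of $\Gz_t$ on $\{T_n\le t\}\cap C_n^k$.} On the event $\{\tau=T_n\le u\}$ we have $\xi\id_{\{u\ge\tau\}}=\xi$. Hence on $\{T_n\le t\}\cap\{\tau=T_n\}$ the information in $\Gz_t$ reduces to $\F_t$ together with the atoms $C_n^j$. More precisely, we first show at the level of $\F_t\vee\sigma(\xi\id_{\{u\ge\tau\}},u\le t)$ (before right-continuity) that every $G\in\Gz_t$ satisfies
\[
G\cap\{T_n\le t\}\cap C_n^k=F\cap\{T_n\le t\}\cap C_n^k \quad\text{for some } F\in\F_t.
\]
We prove this by a monotone class argument: the class of such $G$ is a $\sigma$-field containing $\F_t$ and the generators $\{\xi\id_{\{u\ge\tau\}}=j\}$, because on $C_n^k\cap\{T_n\le t\}$ each generator equals the $\F_t$-measurable set $\{T_n\le u\}$ or its complement. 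Here we use that $T_n$ is an $\ff$-stopping time.

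Right-continuity is the main obstacle, since $\Gz_t=\bigcap_{s>t}(\cdots)$. We handle it the way \cite{thin} does. First we prove the conditional expectation formula for the non-right-continuous filtration at times $s>t$. Then we let $s\downarrow t$, using the right-continuity of $\ff$, the càdlàg property of $z^{n,k}$ and of $\E[X\id_{C_n^k}|\F_\cdot]$, the positivity of $z^{n,k}_t$ on $C_n^k$, and backward martingale convergence for $\E[X|\cdot]$.

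\textbf{The formula.} Set $Y:=\E[X\id_{C_n^k}|\F_t]/z^{n,k}_t$. The candidate $\id_{\{T_n\le t\}\cap C_n^k}Y$ is $\Gz_t$-measurable, since $C_n^k\cap\{T_n\le t\}\in\Gz_t$. For $G\in\Gz_t$, choose $F$ as above. Then
\[
\E[X\id_G\id_{\{T_n\le t\}\cap C_n^k}]=\E[X\id_{C_n^k}\id_{F\cap\{T_n\le t\}}]=\E[\E[X\id_{C_n^k}|\F_t]\id_{F\cap\{T_n\le t\}}],
\]
and
\[
\E[Y\id_{C_n^k}\id_{F\cap\{T_n\le t\}}]=\E[Y z^{n,k}_t\id_{F\cap\{T_n\le t\}}]=\E[\E[X\id_{C_n^k}|\F_t]\id_{F\cap\{T_n\le t\}}].
\]
The two expressions agree, which gives $\E[X|\Gz_t]\id_{\{T_n\le t\}\cap C_n^k}=\id_{\{T_n\le t\}\cap C_n^k}Y$. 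Division by $z^{n,k}_t$ is legitimate on $C_n^k$ by the positivity step. Integrability of $Y\id_{C_n^k}$ follows from $\E[|Y|\id_{C_n^k}]\le\E|X|$.

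Finally, multiplying this identity by $\id_{\{s\ge T_n\}}\le\id_{\{t\ge T_n\}}$ gives the stated formula for every $s\le t$.
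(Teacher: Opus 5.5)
Your proof is correct and is essentially the argument the paper delegates to \cite[Lemma 1.5]{thin}: you get positivity of $z^{n,k}$ and $z^{n,k}_-$ on $C_n^k$ from absorption at zero of a nonnegative martingale, and you show that the trace of $\Gz_t$ on $\{T_n\le t\}\cap C_n^k$ coincides with the trace of $\F_t$, after which the conditional expectation is checked directly. The limit $s\downarrow t$ is in fact unnecessary: if $G\cap\{T_n\le t\}\cap C_n^k=F_m\cap\{T_n\le t\}\cap C_n^k$ with $F_m\in\F_{t+1/m}$ for every $m$, then $F:=\bigcup_M\bigcap_{m\ge M}F_m\in\F_{t+}=\F_t$ works directly (and a small slip: $z^{n,k}$ is uniformly integrable because it is bounded, not because it is positive).
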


In Theorem \ref{decomposition2} we include an important result about the behaviour of $\ff$-martingales as processes in $\ggz$, called $({\mathcal H}^\prime)$-hypothesis. 
Let us first recall that hypothesis $({\mathcal H}^\prime)$ holds for $(\ff, \ggz)$ if any $\ff$-martingale is an $\ggz$-semimartingale.  

\begin{theorem}
\label{decomposition2}
The hypothesis $({\mathcal H}^\prime)$ is satisfied for  $(\ff, \ggz)$.
Moreover, for each $\ggz$-predictable and bounded process $G$ and each $\ff$-local martingale $Y$ the integral $X:=\int G dY$ is an $\ggz$-semimartingale with the canonical decomposition
\begin{equation}
\label{thinH2} X_t= \widehat X_t + \int_0^{t\land
\tau}\frac{G_s}{Z_{s-}}d\cro{Y,m}_s +\sum_{n,k} \id_{C_{n,k}}
\int_0^t \id_{\{s>T_n\}}\frac{G_s}{z^{n,k}_{s-}}d\cro{Y,z^{n,k}}_s
\end{equation}
where $\widehat X$ is an $\ggz$-local martingale.
\end{theorem}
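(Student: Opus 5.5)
The plan is to split $X$ along the random interval $\Rbrack 0,\tau\Rbrack$ and its complement, writing $X=\id_{\Rbrack 0,\tau\Rbrack}\is X+\id_{\Rbrack \tau,\infty\Rbrack}\is X$, and to treat the two pieces separately. The first piece only sees information before $\tau$, so the standard progressive enlargement argument applies (Jeulin \cite{j}). The second piece lives after $\tau$, where on each atom $C_n^k$ the filtration $\ggz$ behaves like an initial enlargement with the single set $C_n^k$. There we use Lemma \ref{key2} and Jacod-type computations with the martingale densities $z^{n,k}$. By localisation and linearity it suffices to take $Y$ a locally square integrable (or bounded-jump) $\ff$-martingale and $G$ bounded, so that the brackets $\cro{Y,m}$ and $\cro{Y,z^{n,k}}$ exist. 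General $Y$ is recovered via the Davis decomposition, as in \cite{j}.

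\textbf{Before $\tau$.} On $\Rbrack 0,\tau\Rbrack$ every $\ggz$-predictable process coincides with an $\ff$-predictable one, since $\G_{t-}\cap\{\tau\geq t\}=\F_{t-}\cap\{\tau\ge t\}$ by \eqref{xitau}. It is therefore enough to prove the classical Jeulin formula: for an $\ff$-martingale $Y$, the process $Y_{t\wedge\tau}-\int_0^{t\wedge\tau}Z_{s-}^{-1}d\cro{Y,m}_s$ is a $\ggz$-local martingale. The proof reduces, through $\E[\cdot\,|\G_s]\id_{\{s<\tau\}}=\id_{\{s<\tau\}}\E[\cdot\,\id_{\{s<\tau\}}|\F_s]/Z_s$, to the identity $\E[Y_\tau-Y_s;\,s<\tau|\F_s]=\E[\int_s^\infty d\cro{Y,m}_u|\F_s]$. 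That identity is the defining property of $m=A^o+Z$ in \eqref{martm}, because $\E[Y_\tau\id_{\{\tau<\infty\}}]=\E[\int Y dA^o]$. The extra variable $\xi$ plays no role here, because $\{s<\tau\}$ carries no $\xi$-information.

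\textbf{After $\tau$.} On $C_n^k\cap\{t>T_n\}$, Lemma \ref{key2} says that conditioning on $\G_t$ is conditioning on $\F_t$ given $C_n^k$, with density $z^{n,k}$. We show that for fixed $(n,k)$ the process
\[
\id_{C_n^k}\Big(\int_0^t\id_{\{s>T_n\}}G_s\,dY_s-\int_0^t\id_{\{s>T_n\}}\frac{G_s}{z^{n,k}_{s-}}d\cro{Y,z^{n,k}}_s\Big)
\]
is a $\ggz$-local martingale. The method is Girsanov's theorem under the measure $\P(\cdot\,|C_n^k)$: on $\F_t$ this measure has density $z^{n,k}_t/\P(C_n^k)$, which is strictly positive together with $z^{n,k}_{t-}$ on $C_n^k$ by Lemma \ref{key2}. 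Hence an $\ff$-martingale $Y$ becomes a semimartingale with drift $d\cro{Y,z^{n,k}}/z^{n,k}_-$ under that measure. Using the formula of Lemma \ref{key2} with $s\ge T_n$, this translates into the required $\ggz$-martingale property on $C_n^k\cap\{s\ge T_n\}$. The $\ggz$-predictable integrand $G$ is handled by approximating it by simple predictable processes; on $\Rbrack T_n,\infty\Rbrack\cap C_n^k$ these coincide with $\ff$-predictable ones. Summing over $(n,k)$ gives the series in \eqref{thinH2}.

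\textbf{Main obstacle.} The delicate point is that the sum over $(n,k)$ is infinite. We must show that $\sum_{n,k}\id_{C_n^k}\int\id_{\{s>T_n\}}|G_s|\,(z^{n,k}_{s-})^{-1}|d\cro{Y,z^{n,k}}_s|$ is locally integrable, or at least finite, so that the drift has finite variation and the pieces glue together. Finiteness of entropy is not assumed, so I would only aim for a.s. finiteness and local boundedness along a $\ggz$-localising sequence. The first attempt is the Kunita--Watanabe inequality $|d\cro{Y,z}|\le (d\cro{Y})^{1/2}(d\cro{z})^{1/2}$. For each fixed $\omega$ only one term is nonzero, and $\int\id_{C_n^k}(z^{n,k}_{-})^{-2}d\cro{z^{n,k}}$ is a.s. finite on $C_n^k$, because $z^{n,k}$ is bounded away from zero there, again by Lemma \ref{key2}. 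A $\ggz$-localisation with stopping times $\inf\{t>T_n: z^{n,k}_t<1/j\}$ on $C_n^k$ then shows that the sum defines a $\ggz$-predictable finite variation process. This yields $({\mathcal H}^\prime)$ and the decomposition \eqref{thinH2}.
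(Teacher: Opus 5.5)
Your argument is sound in outline, but it takes a genuinely different route from the paper.

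\textbf{How the paper argues.} The paper gets $({\mathcal H}^\prime)$ first, and for free. Since $\ggz\subset\ff\vee\sigma(C^k_n:n,k)$, Jacod's criterion for an initial enlargement by a countable partition, together with Stricker's theorem, makes every $\ff$-semimartingale a $\ggz$-semimartingale. So $\int G\,dY$ is a $\ggz$-integral from the start. The paper then identifies the compensator in one global duality computation:
\begin{itemize}
\item it writes a $\ggz$-predictable $H$ as $J\id_{\{t\le\tau\}}+\sum_{n,k}\id_{C^k_n}\id_{\{t>T_n\}}K(k,n)$, with $J$ and $K(k,n)$ $\ff$-predictable;
\item it uses $\E[\id_{C^k_n}N_\infty]=\E[\cro{z^{n,k},N}_\infty]$ and $\E[N_\tau]=\E[\cro{N,m}_\infty]$ for $N\in H^1(\ff)$, via $H^1$--BMO duality;
\item it inserts $Z_-$ and $z^{n,k}_-$ by predictable projection and tests against $H=\id_{(s,t]}\id_F$.
\end{itemize}

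\textbf{How you argue, and what each route buys.} You construct the decomposition directly: Jeulin's formula on $\{t\le\tau\}$, and after $\tau$ Girsanov under $\P(\cdot\,|C^k_n)$, transported to $\ggz$ by Lemma \ref{key2}. Girsanov must be taken in Lenglart's form, since $\P(\cdot\,|C^k_n)$ is only absolutely continuous. In your route $({\mathcal H}^\prime)$ is a consequence rather than an input. Your route avoids Jacod's theorem and the duality, and it makes explicit that after $\tau$ the enlargement is, atom by atom, an initial enlargement by a single set. The paper's route is shorter. However, its interchange of $\sum_{n,k}$ with expectations is exactly where the gluing issue you raise is hidden.

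\textbf{Gluing step.} Your main-obstacle paragraph spends its effort on the drift, whose finiteness is automatic. On $C^k_n$ the path $1/z^{n,k}_-$ is bounded, by Lemma \ref{key2} together with $z^{n,k}_\infty>0$ there, and $\cro{Y,z^{n,k}}$ has finite variation. So Kunita--Watanabe is not needed, and a predictable finite-variation process is automatically locally of integrable variation. What does need proof is that the countable sum $\sum_{n,k}M^{n,k}$ of the $\ggz$-local martingales you obtain after $\tau$ is again a $\ggz$-local martingale. The times $\inf\{t>T_n:z^{n,k}_t<1/j\}$ do not give this by themselves. Instead, let $\rho^{n,k}_j$ localize $M^{n,k}$ and set
\begin{itemize}
\item $R_j:=\rho^{n,k}_j\vee\tau$ on $C^k_n$ for $n,k\le j$;
\item $R_j:=\tau$ on the remaining atoms;
\item $R_j:=\infty$ on $\{\tau=\infty\}$.
\end{itemize}
This $R_j$ is a $\ggz$-stopping time because $C^k_n\cap\{T_n\le t\}\in\G_t$, and it increases to $\infty$. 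Since each $M^{n,k}$ vanishes up to $\tau$, the stopped sum is a finite sum of martingales.

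\textbf{Order of steps.} Treat $G\equiv1$ first. This shows that $Y$ is a $\ggz$-semimartingale, so $\int G\,dY$ is defined. The formula for bounded $G$ then follows by integrating $G$ against $\widehat Y$ and the drift, with no approximation of $G$ needed.
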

\begin{proof}
The proof is similar to the original proof of \cite[Theorem 4.1]{thin}. The first part follows from Jacod's result \cite{jacod1985grossissement} and Stricker's Theorem \cite[Theorem 4, Chapter II]{protter} since $\ggz \subset \ff\lor \sigma(C_n^k: k,n)$.
We then need to establish the form of a $\ggz$-predictable process $H$. 

\cite[Lemma (4,4)]{j} implies that
\[
H_t=\id_{\{t\leq \tau\}}J_t+\id_{\{\tau<t\}}K_t(\xi, \tau) \quad t \geq 0,
\]
where $J$ is an $\ff$-predictable bounded process and $K: \mathbb
R_{+} \times \Omega \times\mathbb N\times\mathbb R_{+} \rightarrow
\mathbb R $ is $\mathcal P \otimes  {\mathcal{N}}  \otimes
\mathcal B(\mathbb R_{+})$-measurable. Note that, since $\{t\leq
\tau\}\subset \{Z_{t-}>0\}$, $J$ can be chosen to satisfy
$J_t=J_t\id_{\{Z_{t-}>0\}}$. Since $\tau$ is a thin time,
we can rewrite the process $H$ as
\begin{align*}
H_t&=J_t\id_{\{t\leq \tau\}}+\sum_{n,k}  \id_{\{\tau<t\}} K_t(\xi,
T_n)\id_{C_{n,k}}
=J_t\id_{\{t\leq \tau\}}+\sum_{n,k}
\id_{\{T_n<t\}}\id_{\{T_n=\tau\}} \id_{\{\xi=k\}} K_t(k, T_n).
\end{align*}
Note that each process $K   (k,n) $ is $\ff$-predictable and
bounded and, since $C_{n,k}\subset \{z^{n,k}_{t-}>0\}$ by Lemma \ref{key2}, $K(k,n)$
can be chosen to satisfy $K_t(k,n)
=K_t(k,n)\id_{\{z^{n,k}_{t-}>0\}}$.
 
Let now $X$ be an $H^1(\ff)$-martingale. Then stochastic integrals $J\is X$ and $K(k,n)\is X$ are well defined and each of them is $H^1(\ff)$-martingale.
For each $k,n$ and for each bounded $\ff$-martingale $N$, by integration by parts, we have that
\begin{equation}
\label{duality}
\E\left[\id_{C^k_n}N_\infty\right]=\E\left[[z^{n,k}, N]_\infty\right]=\E\left[\cro{z^{n,k}, N}_\infty\right].
\end{equation}
Since the map $N\to \E[\id_{C^k_n}N_\infty]$ is a linear form, the duality $(H^1,BMO)$ implies that \eqref{duality} holds for any $H^1(\ff)$-martingale $N$. 
Similarly, by \cite[Proposition 1.32]{aj}, for any $H^1(\ff)$-martingale $N$, the process $\cro{N,m}$ exists and we have 
\begin{align*}
\E\left [ N_\tau\right ]=\E\left [ [N, m]_\infty\right ]
=\E\left [\cro{N, m}_\infty\right ]
\end{align*}
where $m$ is given in \eqref{martm}. 
Therefore
\begin{align*}
\E\left [\int_0^\infty H_sd X_s\right ]=&
\E\left [\int_0^\tau J_s d X_s\right ]
+\sum_{n,k}\E\left [\id_{C_n^k}\int_{0}^\infty K(k,n)_s dX_s\right]\\
=&\E\left [\int_0^\infty J_s d \cro{m,X}_s\right ]
+\sum_{n,k}\E\left [\int_{0}^\infty K(k,n)_s d\cro{z^{n,k},X}_s\right]
\end{align*}
Then, since for any predictable finite variation process $V$, $\E[\int_0^\infty h_sdV_s]=\E[\int_0^\infty \;^p h_sdV_s]$, we deduce
\begin{align*}
\E&\left [\int_0^\infty H_sd X_s\right ]=
\E\left [\int_0^\infty \frac{Z_{s-}}{Z_{s-}}\id_{\{Z_{s-}>0\}}J_s d \cro{m,X}_s\right ]\\
&\phantom{}+\sum_{n,k}\E\left [\int_{0}^\infty \frac{z^{n,k}_{s-}}{z^{n,k}_{s-}}\id_{\{z^{n,k}_{s-}>0\}}K(k,n)_s d\cro{z^{n,k},X}_s\right]\\
=&\E\left [\int_0^\tau \frac{1}{Z_{s-}}J_s d \cro{m,X}_s\right ]
+\sum_{n,k}\E\left [\id_{C_n^k}\int_{0}^\infty \frac{1}{z^{n,k}_{s-}}K(k,n)_s d\cro{z^{n,k},X}_s\right].
\end{align*}
For any $H^1(\ff)$-martingale $Y$ and $\ggz$-predictable process $G\equiv 1$ the assertion of the theorem follows as for any $s\leq t$ and $F\in \G_s$ the process $H=\id_{(s,t]}\id_{F}$ is clearly $\ggz$-predictable.
To end the proof we recall that any local martingale is locally in $H^1$.
\end{proof}

\section{Entropy of a pair $(\xi, \tau)$}
\subsection{Main result}
Theorem \ref{entropy2}, where we answer the question of Meyer about measurement of the perturbation introduced progressively in time, is  the main result of this section and constitute a generalization of \cite[Theorem 2]{entropieyor}. 
To formulate it in the context of the enlargement from Section \ref{s:2}, we need to define a more general object than \eqref{entropytau}, namely  the $\gamma$-entropy of a pair $(\xi,\tau)$  by
\begin{equation}
\label{entropytau2}
H_\gamma(\xi, \tau):=\sum_{n,k}\E\left[\id_{C_n^k}\Big[\log \frac{1}{z^{n,k}_{T_n}}\Big ]^\gamma\right]\quad \quad\gamma>0 .
\end{equation}

\begin{remark}
(a) If $\tau$ is an $\ff$-stopping time then $H_\gamma(c,\tau)=0$.
(b) If for any $n$ and $k$ the set $C^k_n$ is in $\F_{T_n}$, then $\id_{C^k_n}\log
z^{n,k}_{T_n}=\id_{C^k_n}\log \id_{C^k_n}=0$ and we do not
gain any additional information.
(c) As noted in \cite[Proposition 1.2]{thin}, the exhausting sequence $(T_n)_n$ of a thin random time is not unique. However $H_\gamma(\xi,\tau)$ is invariant under different exhausting sequences of $\tau$.
Let $T$, $T_1$ and $T_2$ be 
$\ff$-stopping times  such that $\{\tau=T\}=\{\tau=T_1\}\cup\{\tau=T_2\}$ and $\graph{T_1}\cap\graph{T_2}=\emptyset$. Then we have
\begin{align*}
\id_{\{\tau=T\}}\log^\gamma \P(\tau=T|\F_T)&=\id_{\{\tau=T_1\}}\log^\gamma \P(\tau=T|\F_T)+\id_{\{\tau=T_2\}}\log^\gamma \P(\tau=T|\F_T)\\
&=\id_{\{\tau=T_1\}}\log^\gamma \P(\tau=T_1|\F_{T_1})+\id_{\{\tau=T_2\}}\log^\gamma \P(\tau=T_2|\F_{T_2}).
\end{align*}
\end{remark}

The $\gamma$-entropy of $(\xi,\tau)$ reveals to be an adequate notion to treat the stability of martingale spaces with respect to progressive enlargement of filtration with partition. 
In this section we work under standing assumption that 
\begin{equation}
\label{C}
\tag{{\bf C}} \quad \textrm{all $\ff$-martingales are continuous}
\end{equation}
For any $p\in [1,\infty)$, we denote $H^p$ and $S^p$ the
Banach spaces consisting respectively of
continuous local martingales and continuous semimartingales equipped with the following norms:\\
(a) a continuous $\ff$-local martingale $X$ belongs to $H^p$ if
$$\norm{X}_{H^p}:=\norm{\cro{X}^{1/2}_\infty}_{L^p}<\infty;$$
(b) a continuous $\ff$-semimartingale $X$, with canonical decomposition $X=M+V$, belongs to $S^p$ if
$$\norm{X}_{S^p}:=\norm{\cro{M}^{1/2}_\infty}_{L^p}+\norm{\int_0^\infty|dV_t|}_{L^p}<\infty.$$

We are ready to state the main result of this section. 

\begin{theorem}
\label{entropy2}
Let $(C_n^k)_{n,k}$ be an $\F_\infty$-measurable partition, $\tau$ be a thin random time with exhausting sequence $(T_n)_n$ and $\ggz$ be given by \eqref{xitau}.
Assume \eqref{C} and let $r\in[1,\infty)$, $p,\gamma>0$ satisfy $\frac{1}{r}=\frac{1}{p}+\frac{1}{\gamma}.$ The the following conditions are equivalent:
\begin{itemize}
\item[(a)] for each $\ff$-local martingale $Y$ and each $\ggz$-predictable process $G$, the $\ggz$-semimartingale $X:=\int G dY$ satisfies:
\begin{equation*}
\norm{X}_{S^r(\ggz)}\leq C_{p,r} \norm{\cro{X}^{1/2}_\infty}_{L^p};
\end{equation*}
\item[(b)]
$H_{\gamma/2}(\xi, \tau)<\infty$. 
\end{itemize}
In particular, if the conditions (a) and (b) are satisfied, then $H^p(\ff)$ is continuously embedded in $S^r(\ggz)$.
\end{theorem}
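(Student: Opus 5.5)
The plan is to adapt Yor's argument for initial enlargement \cite[Theorem 2]{entropieyor} to the progressive setting, using the canonical decomposition \eqref{thinH2} from Theorem \ref{decomposition2}. Under \eqref{C} every $\ff$-martingale is continuous. Hence the $\ggz$-martingale part of $X=\int G\,dY$ has bracket $\int G^2 d\cro{Y}$. This bracket is invariant under the enlargement, so for $|G|\le 1$ we get $\cro{\widehat X}=\cro{X}$. Therefore the $S^r(\ggz)$ norm of $X$ is controlled by $\norm{\cro{X}^{1/2}_\infty}_{L^r}\le\norm{\cro{X}^{1/2}_\infty}_{L^p}$ plus the $L^r$ norm of the total variation of the drift. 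By Kunita--Watanabe, the drift variation is bounded by
\[
\cro{X}_\infty^{1/2}\Big(\int_0^\tau \frac{d\cro{m}_s}{Z_s^2}\Big)^{1/2}+\sum_{n,k}\id_{C_n^k}\,\cro{X}_\infty^{1/2}\Big(\int_{T_n}^\infty\frac{d\cro{z^{n,k}}_s}{(z^{n,k}_s)^2}\Big)^{1/2}.
\]
By H\"older with $\frac1r=\frac1p+\frac1\gamma$, (b)$\Rightarrow$(a) then reduces to two $L^{\gamma}$ bounds on the square roots of these brackets. The first term, before $\tau$, is the progressive-enlargement part. I would check separately, or absorb via the thin structure, that it is harmless: on $\{\tau=T_n\}$ it is dominated by the corresponding $z^{n,k}$ term on $[0,T_n]$, since $m$ is continuous and $Z$ relates to $\sum z^{n,k}$.

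The key step is the identity for the conditional bracket of $\log z$. Fix $n,k$ and write $z=z^{n,k}$, which is a continuous positive martingale on $C^k_n$ after $T_n$ by Lemma \ref{key2}. It\^o gives
\[
\log z_\infty-\log z_{T_n}=\int_{T_n}^\infty\frac{dz_s}{z_s}-\frac12\int_{T_n}^\infty\frac{d\cro{z}_s}{z_s^2}.
\]
On $C^k_n$ we have $z_\infty=1$ because the partition is $\F_\infty$-measurable. Under the measure $\id_{C^k_n}d\P/\P(C^k_n)$, Girsanov turns $\int dz/z$ into a martingale with drift $\int d\cro z/z^2$. So, restricted to $C_n^k$,
\[
\log\frac1{z_{T_n}}=\widetilde M_\infty+\tfrac12 \cro{\widetilde M}_\infty,\qquad \widetilde M \text{ a } \ggz\text{-local martingale with }\cro{\widetilde M}_\infty=\int_{T_n}^\infty\frac{d\cro{z}_s}{z_s^2}.
\]
Summing over $n,k$ gives a single continuous $\ggz$-local martingale $\widetilde M=\sum \id_{C_n^k}\int_{T_n\vee\cdot}dz^{n,k}/z^{n,k}$ (with compensation) and the increasing process $\Lambda=\sum\id_{C^k_n}\int_{T_n}d\cro{z^{n,k}}/(z^{n,k})^2$. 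We have $\widetilde M_\infty+\frac12\Lambda_\infty=\sum\id_{C_n^k}\log(1/z^{n,k}_{T_n})=:L$. The main obstacle is proving $\E[\Lambda_\infty^{\gamma/2}]\asymp\E[L^{\gamma/2}]$ for all $\gamma>0$. Since $\Lambda=\cro{\widetilde M}$ and $L\ge0$, for $\gamma\ge2$ I would use BDG together with $\Lambda\le 2(L-\widetilde M_\infty)$. For small $\gamma$ I would use Lenglart's domination inequality, or the Garsia--Neveu/good-$\lambda$ technique of Yor, applied to the continuous pair $(\widetilde M,\Lambda)$ stopped appropriately. This yields $\norm{\Lambda_\infty^{1/2}}_{L^\gamma}\le c_\gamma\norm{L^{1/2}}_{L^\gamma}=c_\gamma H_{\gamma/2}(\xi,\tau)^{1/\gamma}$, and the reverse inequality follows in the same way. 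This proves (b)$\Rightarrow$(a), and the continuous embedding $H^p(\ff)\hookrightarrow S^r(\ggz)$ follows by taking $G\equiv1$.

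For (a)$\Rightarrow$(b), I would choose $Y$ and $G$ to saturate Kunita--Watanabe. Take $Y^{n,k}=z^{n,k}$ (stopped/localized) and $G=\sum_{n,k}\id_{C^k_n}\id_{(T_n,\infty)}(z^{n,k})^{-1}\,h$, where $h$ is an $\ff$-predictable weight. Then the drift in \eqref{thinH2} equals $\int h\, d\Lambda$ while $\cro X=\int h^2d\Lambda$. Choosing $h=\Lambda^{\beta}$ (with a localization to keep things finite) turns (a) into $\norm{\Lambda_\infty^{a}}_{L^r}\lesssim\norm{\Lambda_\infty^{b}}_{L^p}$ with exponents matching the H\"older relation. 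By a standard scaling/duality argument, as in Yor's proof for the initial case, this forces $\Lambda_\infty^{1/2}\in L^\gamma$. Monotone convergence removes the localization, and the equivalence above gives $H_{\gamma/2}(\xi,\tau)<\infty$. I expect the exponent bookkeeping in this converse, and the small-$\gamma$ comparison between $\Lambda$ and $L$, to be the delicate parts.
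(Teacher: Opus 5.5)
\textbf{Main gap: the pre-$\tau$ term.} You need $\cro{Y^b}_\infty^{1/2}=\big(\int_0^\tau Z_{s-}^{-2}\,d\cro{m}_s\big)^{1/2}\in L^\gamma$ for (b)$\Rightarrow$(a), and your proposed domination by ``the corresponding $z^{n,k}$ term on $[0,T_n]$'' fails.

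First, $m_t=\sum_n\P(\tau=T_n<\infty|\F_{t\wedge T_n})+\P(\tau=\infty|\F_t)$ involves every $n$ with $T_n>t$ at once. So there is no pathwise comparison between $d\cro{m}$ and $d\cro{z^{n,k}}$.

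Second, and decisively, even if there were one, the bound $\id_{C^k_n}\int_0^{T_n}(z^{n,k}_s)^{-2}d\cro{z^{n,k}}_s$ is governed by $\log(1/z^{n,k}_0)$. That is the initial-enlargement entropy of the partition, not $H_{\gamma/2}(\xi,\tau)$. For example, if every $C^k_n\in\F_{T_n}$, then $H_{\gamma/2}(\xi,\tau)=0$. Yet (with $\F_0$ trivial) $\sum_{n,k}\E\big[\id_{C^k_n}\int_0^{T_n}(z^{n,k}_s)^{-2}d\cro{z^{n,k}}_s\big]=2\sum_{n,k}\P(C^k_n)\log(1/\P(C^k_n))$, which may be infinite.

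The missing idea is that this term needs no entropy condition at all: $\cro{Y^b}_\infty^{1/2}\in L^\gamma$ for every $\gamma>0$. This follows from Yor's inequality for continuous martingales stopped at an arbitrary random time, which gives $\norm{(\int_0^\tau Z_s^{-2}d\cro{m}_s)^{1/2}}_{L^\gamma}\le C\big(1+\norm{(\log 1/U)^{1/2}}_{L^\gamma}\big)$ with $U$ uniform on $[0,1]$. That is exactly what the paper invokes.

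\textbf{The $Y^a$ part: correct, and a different, shorter route.} The paper compares $\cro{Y^a}_\infty$ with $\log(1/I)$, where $I=\sum_{n,k}\id_{C^k_n}\inf_{t\ge T_n}z^{n,k}_t$ (Garsia--Neveu, Lemma \ref{lem1}), and then uses the explicit conditional law of $I$ (Lemma \ref{lem2}). You instead use the identity $L=Y^a_\infty+\tfrac12\cro{Y^a}_\infty$ directly; your $\widetilde M$ is the paper's $Y^a$. This works for every $q>0$ by BDG alone, provided you localize at $\sigma_N=\inf\{t:\cro{Y^a}_t\ge N\}$:
\begin{itemize}
\item since $z^{n,k}_{\sigma_N}\le 1$, you get $\cro{Y^a}_{\sigma_N}\le 2L-2Y^a_{\sigma_N}$;
\item hence $\E[\cro{Y^a}^q_{\sigma_N}]\le c_q\E[L^q]+C_q\E[\cro{Y^a}^q_{\sigma_N}]^{1/2}$, and you can absorb the last term.
\end{itemize}
No Lenglart or good-$\lambda$ step is needed. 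You only obtain the comparison up to additive constants, though. The pure norm equivalence you state fails when $z^{n,k}_{T_n}$ is close to $1$, but finiteness is all the theorem requires.

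\textbf{A smaller issue in (a)$\Rightarrow$(b).} Your test process $\sum_{n,k}\id_{C^k_n}\int_{T_n}^{\cdot} h\,(z^{n,k})^{-1}dz^{n,k}$ is not of the form $\int G\,dY$ for a single $\ff$-local martingale $Y$, which is the class (a) quantifies over. Testing one pair $(n,k)$ at a time gives bounds that are uniform in $(n,k)$ but do not sum. This is harmless when all $\ff$-martingales are integrals against one continuous martingale $W$, since then your process equals $\int G\,dW$. Otherwise it needs an argument, which the paper delegates to Yor's Lemma 2.
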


\subsection{Proof of Theorem \ref{entropy2}}
By Theorem \ref{decomposition2}, under assumption \eqref{C}, each $X$ of the form $X=\int GdY$ where $Y$ is an $\ff$-local martingale and $G$ is an $\ggz$-predictable process, has the decomposition
\begin{equation}
\label{YaYb}
X=\wh X+\cro{X,Y^b}+\cro{X,Y^a}
\end{equation}
where $Y^b$ and $Y^a$ are $\ggz$-local martingales given by 
\begin{align}
\label{eq:YY}
Y^b_t:=\int_0^{\tau\land t}\frac{1}{Z_{s-}}d\wh m_s \quad\textrm{and} \quad Y^a_t:=\sum_{n,k} \id_{C^k_n}\int_0^t\id_{\{s>T_n\}}\frac{1}{z^{n,k}_s}d\,\wh z^{\;n,k}_s
\end{align}
with $\wh X$, $\wh m$ and $\wh z^{\,n,k}$ the $\ggz$-local martingale parts from the Doob--Meyer decomposition of corresponding $\ggz$-semimartingales $X$, $m$ and $z^{n,k}$. 
Since $r<p$ and $X$ is continuous, it always holds that 
$\norm{\cro{\wh X}^{1/2}_\infty}_{L^r}=\norm{\cro{X}^{1/2}_\infty}_{L^r}\leq C\norm{\cro{X}^{1/2}_\infty}_{L^p}$.
Thus, (a) is equivalent to showing that 
\begin{equation}
\label{e1}
\norm{\int_0^\infty|d\cro{X,Y^b}_t+\cro{X,Y^a}_t|}_{L^r}\leq C_{p,r} \norm{\cro{X}^{1/2}_\infty}_{L^p}
\end{equation}
holds for any $X$. 
By \cite[Lemma 2]{entropieyor}, it is further equivalent to 
$\E\!\left[\langle Y\rangle_\infty^{\gamma/2}\right] < \infty.$
Using the fact that stochastic intervals $\Rbrack 0,\tau\Lbrack$ 
and $\Lbrack T_n,\infty\Rbrack\,\cap\, C^k_n$ 
for $n\in\mathbb N$ are pairwise disjoint, 
inequality \eqref{e1} holds for any adequate $X$ 
if and only if 
\begin{align*}
\label{e2}
\E\left [\cro{Y^b}_\infty^{\gamma/2}\right]
&<\infty \quad \textrm{and}\quad
\E\left [\cro{Y^a}_\infty^{\gamma/2}\right]
<\infty.
\end{align*}
Firstly we show that 
\begin{equation}
\label{yb}
\norm{\cro{Y^b}^{1/2}_\infty}_{L^\gamma}<\infty\quad \forall\;\gamma>0.
\end{equation}
By \cite[Remark 5.1 2) p.123]{yorin}, since, for $\gamma>2$, $x^{\gamma/2}$ is moderate Orlicz function, we have
\[
\Big|\!\Big|\left[\int_0^\tau\frac{1}{Z^2_s}d\cro{m}_s\right]^{1/2}\Big|\!\Big|_{L^\gamma}
\leq C\left[1+\Big|\!\Big|\left[\log\frac{1}{U}\right]^{1/2}\Big|\!\Big|_{L^\gamma}\right]
\] 
where $U$ is random variable with uniform distribution on $[0,1]$.
Note that $\int_0^1(-\log x)^{\gamma/2}dx<\infty$ and thus, by the fact that $L^\mu\subset L^\gamma$ for $0<\mu<\gamma<\infty$, the \eqref{yb} holds.

Showing that $\E\left [\cro{Y^a}_\infty^{\gamma/2}\right]<\infty$ if and only if $H_{\gamma/2}(\xi,\tau)<\infty$ for $\gamma=2$ is a simpler special case, which will be useful afterwards in Section \ref{s:utility}, and we start with it.
By properties of dual predictable projection, we have that
\begin{align*}
\label{eq:Ya}
\E\left [\cro{Y^a}_\infty\right]&=\sum_{n,k} \E\left[\int_{T_n}^\infty \id_{C^k_n}\frac{1}{(z^{n,k}_t)^2} d\cro{z^{n,k}}_t\right]
=\sum_{n,k} \E\left[\int_{T_n}^\infty \frac{1}{z^{n,k}_t} d\cro{z^{n,k}}_t\right].
\end{align*}
Consider the function $f: \R_+\to\R_+$ defined as $f(x)=x-x\log x$ for $x>0$ and $f(0)=0$.
Then, It\^o's formula for $z^n$ implies
\[
\id_{C^k_n}=z^{n,k}_{t}-z^{n,k}_{t}\log z^{n,k}_{t}-\int_{t}^\infty \log z^{n,k}_s d z^{n,k}_s-\frac{1}{2}\int_{t}^\infty \frac{1}{z^{n,k}_s} d \cro{z^{n,k}}_s.
\]
We deduce, by taking conditional expectation with respect to $\F_{t}$, that
\begin{equation}
\label{xlogx}
\E\left[\int_{t}^\infty \frac{1}{z^{n,k}_s} d \cro{z^{n,k}}_s|\F_{t}\right]
=2z^{n,k}_{t}\log \frac{1}{z^{n,k}_{t}}.
\end{equation}
Finally, 
\begin{align}
\label{Ya}
\E\left[\cro{Y^a}_\infty\right]
&=\sum_{n,k}\E\left[\int_{T_n}^\infty \frac{1}{z^{n,k}_t} d\cro{z^{n,k}}_t\right]
=-2\sum_{n,k}\E\left[z^{n,k}_{T_n}\log z^{n,k}_{T_n}\right]
=2H(\xi,\tau)<\infty.
\end{align}

In order to complete the proof it remains to show that $H_\gamma(\xi,\tau)<\infty$ if and only if 
$\E\left [\cro{Y^a}^{\gamma}\right]<\infty$ for $\gamma>0$. 
First, Lemma \ref{lem1} implies that it is equivalent to proving  
$$H_\gamma(\xi,\tau)<\infty\quad  \textrm{if and only if} \quad
\E\left [\left [\log \frac{1}{I}\right]^{\gamma}\right]
<\infty$$
where $I$ is defined in \eqref{I}.
To show the latter equivalence, note that, by Lemma \ref{lem2}, we have
\begin{align*}
\E\left [\left [\log \frac{1}{I}\right]^{\gamma}\right]
&=\sum_{n,k} \E\left [\E\left [\id_{C^k_n}\left [\log \frac{1}{I^k_n}\right]^{\gamma}\Big |\F_{T_n}\right]\right ]\\
&=\sum_{n,k} \E\left [\id_{C^k_n}
\frac{1-z^{n,k}_{T_n}}{z^{n,k}_{T_n}}\int_0^{z^{n,k}_{T_n}}\left[\log\frac{1}{\beta}\right]^\gamma\frac{1}{(1-\beta)^2}d\beta
\right ]. 
\end{align*}
Denoting by $z=\sum_{n,k}\id_{C^k_n}z^{n,k}_{T_n}$, taking any $\varepsilon\in(0,1)$ and defining $f(x)=\int_0^{x}\left[\log\frac{1}{\beta}\right]^\gamma d\beta$ for $x\in(0,1)$, we further obtain
\begin{align*}
\E&\left [\left [\log \frac{1}{I}\right]^{\gamma}\right]
=\E\left [
\frac{1-z}{z}\int_0^{z}\left[\log\frac{1}{\beta}\right]^\gamma\frac{1}{(1-\beta)^2}d\beta
\right ]\\
&\leq 
\E\left [\id_{\{z>\varepsilon\}}
\left[\frac{f(\varepsilon)}{\varepsilon(1-\varepsilon)^2}
+\frac{1}{1-\varepsilon}\left[\log\frac{1}{\varepsilon}\right]^\gamma\right]
\right ]
+\E\left [\id_{\{z\leq\varepsilon\}}
\frac{f(z)}{z(1-\varepsilon)^2}\right]\\
&\leq C_1+C_2\E\left [\id_{\{z\leq\varepsilon\}}
\left[\log\frac{1}{z}\right]^\gamma \right]\\
&\leq C_1+C_2H_\gamma(\xi,\tau). 
\end{align*}
Thus we conclude that $
H_\gamma(\xi,\tau)<\infty$ if and only if $\E\left [\left [\log \frac{1}{I}\right]^{\gamma}\right]<\infty$
and the proof is complete.

\section{Additional logarithmic utility of a pair $(\xi,\tau)$}
\label{s:utility}

Proposed measurement, given in \eqref{entropytau2}, of the perturbation to the random system of a partition released progressively in time, is further justified by the connection to the following logarithmic utility maximization problem. We suppose that condition \eqref{C} holds in this section.

Let us consider a continuous financial market model with two assets: a risk-free asset $S^0\equiv 0$ and a risky asset $S$ satisfying:
\begin{equation}
\label{price_s}
dS_t=S_t\left(dM_t+\lambda d\langle M\rangle_t\right)\quad S_0>0    
\end{equation}
where $M$ be a continuous $\mathbb F$-local martingale, and $\lambda$ is an $\mathbb F$-predictable process square integrable w.r.t. $M$.
Note that \eqref{price_s} is so called structure condition which is closely related to the market viability, as well as solution to the log-utility problem (see e.g. \cite{KK, hulley}).

By Theorem \ref{decomposition2}, \eqref{price_s} can be written as
\begin{equation*}
\label{price_sg}
dS_t=S_t\left(d\widehat M_t + \id_{\{t\leq\tau\}}\frac{1}{Z_{t-}}d\cro{M,m}_s +\sum_{n,k} \id_{C_{n,k}}
\id_{\{t>T_n\}}\frac{1}{z^{n,k}_{t-}}d\cro{M,z^{n,k}}_t+\lambda d\langle M\rangle_t\right), 
\end{equation*}
which, by introducing 
\begin{align}
\label{eq:L}
m_t&=\int_0^t\varphi^m_sdM_s+L^m_t\quad\textrm{and}\quad
z^{n,k}_t=\int_0^t\varphi^{n,k}_sdM_s+L^{n,k}_t,
\end{align}
where $L^m$ and $L^{n,k}$ are $\ff$-martingales orthogonal to $M$, and denoting
\begin{align*}
\mu&=\id_{\Rbrack 0, \tau\Lbrack}\frac{\varphi^m}{Z_{s-}}+\sum_{n,k}\id_{C_n^k}\id_{\Lbrack T_n,T\Lbrack}\frac{\varphi^{n,k}}{z^{n,k}},
\end{align*}
becomes
\begin{equation}
\label{price_sgn}
dS_t=S_t\left(d\widehat M_t +(\lambda+\mu) d\langle \widehat M\rangle_t\right),
\end{equation}
where $\widehat M$ is a $\ggz$-local martingale.

Let us fix the time horizon $T\leq \infty$ and the initial wealth $x>0$.
An $\ff$-portfolio (resp. $\ggz$-portfolio) process is an 
$\ff$-predictable ($\ggz$-predictable) process
$\pi=(\pi_s)_{s\in[0,T]}$
such that
$
\int_0^T \pi_s^2\, d\langle M\rangle_s < \infty
$
a.s.
Wealth process $V(x,\pi)$  associate to a portfolio process $\pi$, 
is given by
\begin{equation}
\label{wealth}
dV_t(x,\pi)=\pi_t\, V_t(x,\pi)\,
\frac{dX_t}{X_t},
\quad t\in[0,T] \qquad \textrm{and}\quad  V_0(x,\pi)=x.
\end{equation}
Process $(\pi_t)$ describes the proportion of total wealth at time $t$
invested in the risky asset and \eqref{wealth} is the self-financing condition (see e.g. \cite{karatzas2021portfolio}).

The regular agent maximizes the following log-utility problem:
\begin{equation}
\sup_{\pi \in \mathcal A^{\ff}(x,T)}\E[\log(V_T(x,\pi))],
\end{equation}
while the insider the following:
\begin{equation}
\sup_{\pi \in \mathcal A^{\ggz}(x,T)}\E[\log(V_T(x, \pi))],
\end{equation}
where $\mathcal A^{\mathbb H}(x,t)$, for $\mathbb H\in \{\ff,\ggz\}$, denotes the class of admissible $\mathbb H$-portfolio processes and is given as $$
\mathcal{A}^{\mathbb H}(x,t)
:=
\left\{
\pi \mid
\pi \text{ is an }\mathbb H\text{-portfolio process and }
E[\log V_t(x,\pi)]<\infty
\right\}
.$$
The connection to entropy of the pair $(\tau,\xi)$ is given in the following result. 
It can be interpreted as additional logarithmic utility of the insider which is realized \emph{after} time $\tau$.

\begin{theorem}
\label{th:logu}
Assume that $\tau\leq T$, and that $L^m$ and $L^{n,k}$ given in \eqref{eq:L} are constant. Then, the additional logarithmic utility of the insider, i.e.
\begin{equation*}
aU(\ff,\ggz):=\sup_{\pi \in \mathcal A^{\ggz}(x,T)}\E[\log(V_T(x,\pi))]-\sup_{\pi \in \mathcal A^{\ff}(x,T)}\E[\log(V_T(x,\pi))]
\end{equation*}
is given by 
\begin{equation*}
aU(\ff,\ggz)=
\E\left[\langle Y^b_\tau\rangle\right]+
H(\xi,\tau)
\end{equation*}
where $H(\xi,\tau)$ is the entropy of $(\xi,\tau)$ and $Y^b$ defined in \eqref{eq:YY}.
\end{theorem}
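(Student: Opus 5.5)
The argument rests on the classical log-utility formula in a market satisfying the structure condition. When the dynamics are $dS_t=S_t(dN_t+\theta\,d\cro{N}_t)$ with $N$ a continuous local martingale, the optimal proportion is $\pi^*=\theta$. The optimal value is then
\[
\log x+\tfrac12\E\Big[\int_0^T\theta_s^2\,d\cro{N}_s\Big],
\]
provided this is finite. I would apply this in $\ff$ with $(N,\theta)=(M,\lambda)$, and in $\ggz$ with $(N,\theta)=(\widehat M,\lambda+\mu)$ using \eqref{price_sgn}; Theorem \ref{decomposition2} supplies the decomposition and gives $\cro{\widehat M}=\cro{M}$.

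The proof of each value is the usual pointwise completion of squares. For any admissible $\pi$,
\[
\log V_T(x,\pi)=\log x+\int_0^T\pi\,dN+\int_0^T\Big(\pi\theta-\tfrac12\pi^2\Big)d\cro{N}
\le \log x+\int_0^T\pi\,dN+\tfrac12\int_0^T\theta^2\,d\cro{N},
\]
with equality exactly when $\pi=\theta$. The stochastic integral has nonpositive expectation after localization, because admissibility makes $\E[\log V_T]$ finite and the sum is bounded below by an integrable term. Taking $\pi=\theta$ and localizing then attains the bound, so each supremum equals the displayed value.

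This gives
\[
aU(\ff,\ggz)=\tfrac12\E\Big[\int_0^T\big((\lambda+\mu)^2-\lambda^2\big)d\cro{M}\Big]
=\tfrac12\E\Big[\int_0^T(2\lambda\mu+\mu^2)\,d\cro{M}\Big].
\]
The hypothesis that $L^m$ and $L^{n,k}$ are constant means $m$ and every $z^{n,k}$ are stochastic integrals against $M$. Hence $\mu\cdot\widehat M=Y^b+Y^a$, and
\[
\int\mu^2\,d\cro{M}=\cro{Y^b}+\cro{Y^a}.
\]
There is no cross term, because the supports $\Rbrack0,\tau\Lbrack$ and $\Lbrack T_n,\infty\Rbrack\cap C_n^k$ are disjoint, as already used in the proof of Theorem \ref{entropy2}. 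Since $\tau\le T$, the time-$T$ brackets coincide with the brackets at infinity. By \eqref{Ya}, $\E[\cro{Y^a}_\infty]=2H(\xi,\tau)$. So $\tfrac12\E\int\mu^2\,d\cro{M}$ yields $\tfrac12\E[\cro{Y^b}_\tau]+H(\xi,\tau)$, and the $Y^b$-part must be matched with the normalization in the statement.

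The main obstacle is the cross term $\E\int_0^T\lambda\mu\,d\cro{M}$. I would show it vanishes as follows. Write
\[
\int\lambda\mu\,d\cro{M}=\cro{\lambda\cdot M,\;Y^a+Y^b},
\]
and use that $\lambda\cdot M$ is an $\ff$-local martingale. From the decomposition of Theorem \ref{decomposition2},
\[
\E\big[\cro{\lambda\cdot M,Y^a+Y^b}_T\big]=\E\big[(\lambda\cdot M)_T-(\widehat{\lambda\cdot M})_T\big],
\]
which is a difference of expectations of an $\ff$-martingale and a $\ggz$-martingale started at $0$. This is zero under integrability; I would localize and use that $\E[\cro{Y^a}+\cro{Y^b}]<\infty$ and $\E\int\lambda^2\,d\cro{M}<\infty$. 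Equivalently, the $\ff$-dual predictable projection of $\mu$ against $d\cro{M}$ vanishes, via the duality relation \eqref{duality}, since $\sum_{n,k}z^{n,k}+Z$ is constant in an appropriate sense.

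If the cross term turns out not to vanish, I would instead read the statement as comparing the values net of the common $\lambda$-part. Finally I would check the factor in front of $\E[\cro{Y^b}_\tau]$ against the paper's convention, and confirm finiteness of both value functions from \eqref{yb} and \eqref{Ya}.
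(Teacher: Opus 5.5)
Your route is the paper's route. You take the value $\log x+\tfrac12\E\int_0^T\theta^2\,d\cro{N}$ in each filtration; the paper quotes this from the literature instead of completing the square. You kill the cross term by $\E[(\lambda\cdot M)_T]-\E[(\lambda\cdot\widehat M)_T]=0$, exactly as the paper does. You then split $\tfrac12\E\int_0^T\mu^2\,d\cro{M}$ into a $Y^b$-part and a $Y^a$-part, using disjoint supports and the constancy of $L^m,L^{n,k}$.

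You can drop both hedges. The cross term does vanish. Since $\E\int_0^T\lambda^2\,d\cro{M}<\infty$ and $\cro{\widehat M}=\cro{M}$, the processes $\lambda\cdot M$ and $\lambda\cdot\widehat M$ are square-integrable martingales in $\ff$ and $\ggz$, and Cauchy--Schwarz controls $\E\int|\lambda\mu|\,d\cro{M}$. Your alternative argument is also right; the precise ``constant'' is $m+\sum_{n,k}\int\id_{\{s>T_n\}}dz^{n,k}\equiv 1$.

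Your coefficient is the correct one. The paper's own penultimate line is $\tfrac12\E[\cro{Y^b}_\tau]+\tfrac12\E[\cro{Y^a}_T]$, and its last equality silently drops the $\tfrac12$ on the $Y^b$-term. So the argument proves $aU(\ff,\ggz)=\tfrac12\E[\cro{Y^b}_\tau]+H(\xi,\tau)$. There is no normalization to match; the factor $1$ in the statement is a slip.

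The step that fails as you justify it is ``since $\tau\le T$, the time-$T$ brackets coincide with the brackets at infinity.'' This holds for $Y^b$, which is stopped at $\tau$. It fails for $Y^a$, which integrates over $\{s>T_n\}$ with no upper cutoff and keeps growing after $T$ as long as $z^{n,k}$ moves. Running the It\^o computation behind \eqref{xlogx} on $[T_n,T]$ instead of $[T_n,\infty)$ gives
\[
\E\big[\cro{Y^a}_T\big]=2H(\xi,\tau)+2\sum_{n,k}\E\big[z^{n,k}_T\log z^{n,k}_T\big].
\]
The correction term is $\le 0$ and vanishes iff every $C^k_n$ is $\F_T$-measurable, so you need that as a hypothesis. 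It is automatic for $T=\infty$ with the $\F_\infty$-measurable partition of Theorem \ref{entropy2}, which \eqref{Ya} itself requires, since \eqref{xlogx} rests on $z^{n,k}_\infty=\id_{C^k_n}$. The paper applies \eqref{Ya} to $\cro{Y^a}_T$ under the same tacit assumption.

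A minor point: your justification of $\E[\int_0^T\pi\,dN]\le 0$ is loose. A lower bound at time $T$ alone does not make a local martingale a supermartingale. The clean route is that $V(\pi)/V(\theta)=\mathcal E\big((\pi-\theta)\cdot N\big)$ is a positive local martingale, hence a supermartingale, and Jensen then gives $\E[\log V_T(\pi)]\le\E[\log V_T(\theta)]$.
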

\begin{proof}
By Theorem 3.5.1. in \cite{AIS}, optimal strategies are given by
\begin{equation}
\pi^{\ff}_t := \lambda_t, \qquad \pi^{\ggz}_t := \lambda_t+\mu_t,
\end{equation}
for the regular agent and insider, respectively,
and the additional logarithmic utility, $aU(\ff,\ggz)$, equals
\begin{equation}
aU(\ff,\ggz)=\E\!\left[
\int_0^T
(\lambda_t+\mu_t)^2\, d\langle M\rangle_t \, 
\right]-
\frac12
\E\!\left[
\int_0^T
\lambda^2_t\, d\langle M\rangle_t \, 
\right].
\end{equation}
Since $dM=d\widehat M+\mu d\langle M\rangle $, 
$$
\E\!\left[
\int_0^T
\lambda_t\mu_t\, d\langle M\rangle_t \, 
\right]=
\E\!\left[
\int_0^T
\lambda_t\, d M_t \, 
\right]-\E\!\left[
\int_0^T
\lambda_t\, dM_t \, 
\right]=0,
$$
where last equality follows from the martingale property. So, $aU(\ff,\gg)$ simplifies to 
\begin{align*}
aU(\ff,\ggz)&=
\frac{1}{2}\E\!\left[\int_0^T\mu^2_t\, d\langle M\rangle_t \, \right]\\
&=\frac12\E\left[\int_0^\tau\frac{(\varphi^m)^2_t}{Z^2_{t-}}d\langle M\rangle\right]
+\frac{1}{2}\sum_{n,k}\E\left[\id_{C^k_n}\int_{T_n}^T\frac{(\varphi^{n,k})^2_t}{(z^{n,k}_{t-})^2}d\langle M\rangle\right]\\
&=\frac12\E\left[\langle Y^b_\tau\rangle\right]
+\frac12\E\left[\langle Y^a_T\rangle\right]\\
&=\E\left[\langle Y^b_\tau\rangle\right]+
H(\xi,\tau)
\end{align*}
with $\ggz$-martingales $Y^b$ and $Y^a$ from \eqref{eq:YY}, and last equality holding by \eqref{Ya}.
\end{proof}

\appendix
\section{Optional projections}
\label{projections}

First we recall the definition of optional projection, see \cite[Theorems 5.1 and 5.2]{chinois} and \cite[p.264-265]{3M}. For definition of dual optional projection see \cite[p.265]{3M}, \cite[Chapter 3 Section 5]{protter}, \cite[Chapter 6 Paragraph 73 p.148]{dell58}, \cite[Sections 5.18, 5.19]{chinois}. 
We point out that the convention we use here allows a jump at $0$.

\begin{definition}
Let $X$ be a measurable bounded (or positive) process.
The optional projection of $X$ is the unique optional process $\ooo{X}$ such that for every stopping time $T$ we have
$$\E\left[X_T\id_{\{T<\infty\}}|\F_T\right]=\ooo{X}_T\id_{\{T<\infty\}} \quad \textrm{a.s.}.$$
\end{definition}

\begin{definition}
\label{dual}
Let $V$ be a c\`adl\`ag pre-locally integrable variation process (not necessary adapted).
The dual optional projection of $V$ is the unique optional process $\oo{V}$ such that for every optional process $H$ we have
$$\E\left[\int_{[0,\infty)} H_sdV_s\right]=\E\left[\int_{[0,\infty)} H_sd\oo{V}_s\right].$$
\end{definition}
\section{Auxiliary lemmas}
Here we present generalisations of two auxiliary lemmas, Lemma 3 and Lemma 4, from \cite{entropieyor}. 
They serve to prove Theorem \ref{entropy2}.
\begin{lemma}
\label{lem1}
For all $\gamma>0$ there exist $c_\gamma$ and $C_\gamma$ such that 
\begin{equation}
\label{ine:inf}
c_\gamma\E\left [\cro{\ya}^\gamma_\infty\right ]
\leq \E\left [\left [\log \frac{1}{I}\right ]^\gamma\right ]
\leq C_\gamma\E\left [\cro{\ya}^\gamma_\infty\right ]
\end{equation}
where $I$ is defined as
\begin{equation}
\label{I}
I:=\sum_{n,k} \id_{C^k_n} I^k_n\quad \textrm{where}\quad I^k_n:=\inf_{t\geq T_n} z^{n,k}_t
\end{equation}
\end{lemma}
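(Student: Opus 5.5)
The plan is to reduce \eqref{ine:inf} to a one-dimensional statement about each continuous positive martingale $z^{n,k}$ after $T_n$, conditioned on $\F_{T_n}$, and then sum over $(n,k)$. The same strategy is used for Lemma 3 in \cite{entropieyor}. On $C^k_n$ we have
\[
\cro{\ya}_\infty=\int_{T_n}^\infty \frac{d\cro{z^{n,k}}_s}{(z^{n,k}_s)^2}=\cro{N^{n,k}}_\infty,
\qquad
N^{n,k}_t:=\int_{T_n}^{t\vee T_n}\frac{dz^{n,k}_s}{z^{n,k}_s}.
\]
Under \eqref{C}, $z^{n,k}_t=z^{n,k}_{T_n}\exp\big(N^{n,k}_t-\tfrac12\cro{N^{n,k}}_t\big)$ for $t\ge T_n$. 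On $C^k_n$ this quantity stays positive, by Lemma \ref{key2}, and converges to $1$. Set $\ell:=\log(1/z^{n,k}_{T_n})\ge 0$ and $X_t:=N^{n,k}_t-\tfrac12\cro{N^{n,k}}_t$. Then on $C^k_n$ we have $X_\infty=\ell$, and
\[
\log\frac{1}{I^k_n}=\ell-\inf_{t\ge T_n}X_t=\sup_{t\ge T_n}(X_\infty-X_t).
\]
Both sides of \eqref{ine:inf} are sums over $(n,k)$ of $\E[\id_{C^k_n}(\cdot)]$. So it suffices to prove the inequalities with $\id_{C^k_n}$ inserted, with constants independent of $(n,k)$.

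\textbf{Step 1: change of measure.} Define $\Q^{n,k}$ on $\F_\infty$ by $d\Q^{n,k}/d\P=\id_{C^k_n}/\P(C^k_n)$. Its density process after $T_n$ is $z^{n,k}_t/z^{n,k}_{T_n}$. By Girsanov, $\beta_t:=N^{n,k}_t-\cro{N^{n,k}}_t$ is a continuous $\Q^{n,k}$-local martingale on $[T_n,\infty)$, and $\cro{\beta}=\cro{N^{n,k}}$. Under $\Q^{n,k}$ we can therefore write $X_t=\beta_t+\tfrac12\cro{\beta}_t$, a Brownian motion with drift $+\tfrac12$ run on the clock $\cro{\beta}$. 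The constraint is that $X_\infty=\ell$ with $\ell$ $\F_{T_n}$-measurable, and $\cro{\beta}_\infty=\cro{\ya}_\infty$ on $C^k_n$. Dambis--Dubins--Schwarz then gives $X_t=W_{\cro{\beta}_t}+\tfrac12\cro{\beta}_t$ with $W$ a $\Q^{n,k}$-Brownian motion, and the terminal clock $S:=\cro{\beta}_\infty$ is a time at which $W_S+S/2=\ell$. This $S$ is the last time the drifted Brownian motion equals $\ell$ (equivalently, the Doob $h$-transform picture).

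\textbf{Step 2: the two inequalities.} For the upper bound,
\[
\log\frac{1}{I^k_n}\le \ell+\sup_{t\le S}|W_t|+\tfrac12 S .
\]
By BDG under $\Q^{n,k}$, $\E_{\Q}[\sup_{t\le S}|W_t|^\gamma]\le c\,\E_{\Q}[S^{\gamma/2}]\le c\,(1+\E_{\Q}[S^\gamma])$. This controls $\log(1/I)$ by $\cro{\ya}$, except for the term $\ell$. To handle $\ell$, I would use $\ell=W_S+S/2\le\sup_{t\le S}|W_t|+S/2$, which removes the need for a separate $\ell$ term.

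For the lower bound we need $S$ controlled by $\log(1/I^k_n)=\sup_t(X_\infty-X_t)$. Because of the positive drift, $S$ is comparable in moments to the running descent of $X$. Concretely, for a Brownian motion with drift $1/2$, the last passage time of any level is bounded in $L^\gamma$ by a constant times $1+\text{(depth of excursion below)}^\gamma$. I would prove this by a good-$\lambda$ inequality, or directly, using the independence of the post-$\F_{T_n}$ path structure. This is exactly Lemma 3 of \cite{entropieyor} applied conditionally on $\F_{T_n}$.

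\textbf{Main obstacle.} The lower bound $\E[S^\gamma]\lesssim\E[(\log 1/I)^\gamma]$ is the hard part, especially the additive constant. Since $\E[\id_{C^k_n}\cdot\text{const}]$ sums to a finite quantity over $(n,k)$, constants are harmless. I would first try to reproduce Yor's argument verbatim after conditioning on $\F_{T_n}$ and passing to $\Q^{n,k}$. Under $\Q^{n,k}$, the process $1/z^{n,k}$ restarted at $T_n$ is a positive local martingale ending at $1$, and Yor's lemma concerns exactly such an object. Uniformity in $(n,k)$ follows because the constants in that lemma are universal.
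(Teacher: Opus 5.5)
Your right-hand inequality is correct, and your route is shorter than the paper's. You use the pathwise bound $\log(1/I^k_n)=\sup_t(X_\infty-X_t)\le 2\sup_t|\beta_t|+\tfrac12\cro{\beta}_\infty$ and then a single BDG estimate under $\Q^{n,k}$. The paper instead applies a conditional BDG at every time $t$, followed by Doob's maximal inequality for $M_t=\E[\cro{\ya}^\mu_\infty|\Gz_t]$ with $\gamma=\mu p$. Both versions carry the additive constant you mention.

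\textbf{The gap.} You do not prove the left-hand inequality $c_\gamma\E[\cro{\ya}^\gamma_\infty]\le\E[(\log\frac1I)^\gamma]$. A good-$\lambda$ argument is only announced. Invoking Lemma 3 of \cite{entropieyor} ``applied conditionally on $\F_{T_n}$'' defers to the very statement being generalised. Yor's lemma is formulated for the initial enlargement, with infima taken from time $0$; turning it into a statement about one cell $C^k_n$ observed from the random time $T_n$ is exactly what has to be shown.

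The structural claim behind your sketch is also false: $S=\cro{\beta}_\infty$ is not a last passage time. A martingale $z^{n,k}\le 1$ stays at $1$ once it reaches $1$. So in the $\cro{\beta}$-clock the drifted motion $W_s+s/2$ is stopped at its first hitting of $\ell$, i.e.\ $S=\sigma_\ell$ with $\sigma_a:=\inf\{s: W_s+s/2=a\}$, and $W$ is independent of $\F_{T_n}$ under $\Q^{n,k}$. This is consistent with $\E_{\Q^{n,k}}[S|\F_{T_n}]=2\ell$, which follows from \eqref{xlogx}, and with the law of $I^k_n$ in Lemma \ref{lem2}.

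\textbf{A fix within your framework.} With the correct identification, the missing direction is short. Conditionally on $\F_{T_n}$, $S$ is an inverse Gaussian first-passage time at level $\ell$. Hence $\E_{\Q^{n,k}}[S^\gamma|\F_{T_n}]\le C_\gamma(1+\ell^\gamma)$: use Jensen for $\gamma\le 1$; for $\gamma>1$ use $\sigma_\ell\le\sigma_{\lceil\ell\rceil}$, which is a sum of $\lceil\ell\rceil$ i.i.d.\ copies of $\sigma_1$, together with Minkowski. Since $\ell\le\log(1/I^k_n)$, multiplying by $\P(C^k_n)$ and summing gives the lower bound up to an additive constant.

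\textbf{The paper's route.} The paper avoids path representations altogether. By Lemma \ref{key2} and \eqref{xlogx},
$\id_{\{t\ge T_n\}\cap C^k_n}\E[\cro{\ya}_\infty-\cro{\ya}_t|\Gz_t]=2\,\id_{\{t\ge T_n\}\cap C^k_n}\log(1/z^{n,k}_t)\le 2\,\id_{C^k_n}\log(1/I^k_n)$,
so the potential of $\cro{\ya}$ is dominated by $2\log(1/I)$. Conditional Jensen for $\gamma\le1$, or the Garsia--Neveu inequality for $\gamma>1$, then gives $\E[\cro{\ya}^\gamma_\infty]\le(2\max(1,\gamma))^\gamma\,\E[(\log\frac1I)^\gamma]$. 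This has no additive constant, which is the form in which the left-hand inequality is stated.
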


\begin{proof}
\emph{Step 1.} We first prove the first inequality in \eqref{ine:inf}. 
We have that
\begin{align*}
\id_{\{t\geq T_n\}\cap C^k_n}& \E\Big [\cro{\ya}_\infty-\cro{\ya}_t|\Gz_{t}\Big ]
=\id_{\{t\geq T_n\}\cap C^k_n} \E\Big [\int_t^\infty \frac{1}{(z^{n,k}_s)^2}d\cro{z^{n,k}}_s|\Gz_{t}\Big ]\\
&=\id_{\{t\geq T_n\}\cap C^k_n}\frac{1}{z^{n,k}_t} \; \E\Big [\id_{C^k_n}\int_t^\infty \frac{1}{(z^{n,k}_s)^2}d\cro{z^{n,k}}_s|\F_{t}\Big ]\\
&=\id_{\{t\geq T_n\}\cap C^k_n}\frac{1}{z^{n,k}_t}\; \E\Big [\int_t^\infty \frac{1}{z^{n,k}_s}d\cro{z^{n,k}}_s|\F_{t}\Big ]\\
&=2\id_{\{t\geq T_n\}\cap C^k_n} \log\frac{1}{z^{n,k}_t}
\leq 
2\id_{C^k_n}\log\frac{1}{I^k_n}
\end{align*}
where the second equality follows from Lemma \ref{key2}, 
the third one from dual predictable projection properties 
and the four one from \eqref{xlogx}.
Therefore, for each $\mu\in(0, 1]$ we deduce that
\begin{align}
\nonumber
\id_{\{t\geq T_n\}\cap C^k_n} \E\Big [\Big[\cro{\ya}_\infty-\cro{\ya}_t\Big ]^\mu|\Gz_{t}\Big ]
&\leq\id_{\{t\geq T_n\}\cap C^k_n} \Big [\E\Big[\cro{\ya}_\infty-\cro{\ya}_t |\Gz_{t}\Big ]^\mu\\
\label{eq1}
&\leq 2^\mu \id_{C^k_n} \Big [\log\frac{1}{I^k_n}\Big ]^\mu.
\end{align}
Consequently, the required inequality for $\gamma\in(0,1]$ follows by
\begin{align*}
\E(\cro{\ya}^\gamma_\infty)
&=\sum_{n,k}\E\left[\id_{C^k_n} \E\Big [\Big[\cro{\ya}_\infty-\cro{\ya}_{T_n}\Big ]^\gamma|\Gz_{T_n}\Big ]\right]\\
&\leq\sum_{n,k}\E\left[\sup_t\id_{\{t\geq T_n\}\cap C_n} \E\Big [\Big[\cro{\ya}_\infty-\cro{\ya}_t\Big ]^\gamma|\Gz_{t}\Big ]\right]\\
&\leq 2^\gamma \sum_{n,k}\E\left[\id_{C^k_n} \Big [\log\frac{1}{I^k_n}\Big ]^\gamma\right]
=2^\gamma \E\left[\left[\log \frac{1}{I}\right]^\gamma\right]
\end{align*}
and for $\gamma>1$ follows by
\begin{align*}
\E(\cro{\ya}^\gamma_\infty)
&\leq \gamma^\gamma\sum_{n,k}\E\left [\sup_t\id_{\{t\geq T_n\}\cap C^k_n} \Big [\E\Big [\cro{\ya}_\infty-\cro{\ya}_t|\Gz_{t}\Big ]\Big ]^\gamma\right]\\
&\leq (2\gamma)^\gamma \sum_{n,k}\E\left[\id_{C^k_n} \Big [\log\frac{1}{I^k_n}\Big ]^\gamma\right]
=(2\gamma)^\gamma \; \E\left[\left[\log \frac{1}{I}\right]^\gamma\right],
\end{align*}
where the first inequality is due to Burkholder-Gundy inequality for terminal value of increasing process and supremum of the associated potential (\cite{dell58} p.188) and the second inequality is due to \eqref{eq1} for $\mu=1$.

\emph{Step 2.} We now prove the second inequality in \eqref{ine:inf}. 
Let $\mu\in(0,1]$ and $p> 1$ and consider $\ggz$-martingale $M_t:=\E(\cro{\ya}^\mu_\infty|\Gz_t)$. 
Firstly we will show that
\begin{align}
\label{i2}
\left [\log \frac{1}{I}\right ]^\mu
\leq C_\mu\Big[1+\sup_t M_t\Big ].
\end{align}
By It\^o's lemma and decomposition \eqref{thinH2} applied to $z^{n,k}$ we obtain that on $\{t\geq T_n\}\cap C^k_n$
\begin{align*}
\log\frac{1}{z^{n,k}_t}&
=\int_t^\infty \frac{1}{z^{n,k}_s} d \wh z^{\;n,k}_s+\frac{1}{2}\int_t^\infty \frac{1}{(z^{n,k}_s)^2} d \cro{z^{n,k}}_s.
\end{align*}
Next, by taking conditional expectation w.r.t $\Gz_t$ and using inequality $|x+y|^\mu\leq |x|^\mu+|y|^\mu$ for $\mu\in(0,1]$, we deduce that on $\{t\geq T_n\}\cap C^k_n$
\begin{align*}
&\left[\log\frac{1}{z^{n,k}_t}\right]^\mu 
\leq \E\left [\left |\int_t^\infty \frac{1}{z^{n,k}_s} d \wh z^{\;n,k}_s\right |^\mu \big |\Gz_t\right]
+\frac{1}{2^\mu}\E\left [\left [\int_t^\infty \frac{1}{(z^{n,k}_s)^2} d \cro{z^{n,k}}_s\right]^\mu\big |\Gz_t\right]\\
 &
\leq C\left [\E\left [\left [\int_t^\infty \frac{1}{(z^{n,k}_s)^2} d \cro{z^{n,k}}_s\right ]^{\mu/2} \big |\Gz_t\right]
+\E\left [\left [\int_t^\infty \frac{1}{(z^{n,k}_s)^2} d \cro{z^{n,k}}_s\right]^\mu\big |\Gz_t\right]\right]
\\
 &
\leq C\left[\E\left [\cro{\ya}_\infty^{\mu/2} \big |\Gz_t\right]
+\E\left [\cro{\ya}_\infty^\mu\big |\Gz_t\right]\right]
\end{align*}
where in the second inequality we have used Burkholder-Davis-Gundy inequality for continuous local martingales (see \cite[IV-42, p.93]{williamsvol2}) applied to $\wh \ff$-local martingale $N\id_{\wh F}$ for any $\wh F\in\wh \F_0$ where
\begin{align*}
N_T&:=\int_t^{T+t} \frac{1}{z^{n,k}_s} d \wh z^{\;n,k}_s\quad\textrm{and}\quad\wh\gg:=(\wh \Gz_T)_{T\geq 0}\quad\textrm{with}\quad \wh\Gz_T:=\Gz_{T+t}.
\end{align*}
Finally, using inequality $x+x^2\leq 2 x^2+1/4$ to $x=\cro{\ya}_\infty^{\mu/2}$ we conclude that on $\{t\geq T_n\}\cap C^k_n$
\begin{align*}
\left[\log\frac{1}{z^{n,k}_t}\right]^\mu
&
\leq C\left[1+\E\left [\cro{Y^a}_\infty^\mu\big |\Gz_t\right]\right ]
\end{align*}
and by taking supremum over $t$ and summing over $n$ the inequality \eqref{i2} follows.
In order to prove general case, relying on \eqref{i2} and the inequality $(1+x)^p\leq 2^{p-1}(1+x^p)$, we obtain
\begin{align*}
\left [\log \frac{1}{I}\right ]^{\mu p}
\leq C_\mu^p\, 2^{p-1}\Big[1+\sup_t M^p_t\Big ].
\end{align*}
Then, by taking expectations and applying Doob's maximal inequality to $M$ we obtain
\begin{align*}
\E\left [\Big[\log \frac{1}{I}\Big ]^{\mu p}\right ]
&\leq C_\mu^p\, 2^{p-1}\Big[1+\E(\sup_t M^p_t)\Big ]
\leq C_\mu^p\, 2^{p-1}\Big[1+\frac{p^p}{(p-1)^p}\E( \cro{\ya}^{\mu p}_\infty)\Big ].
\end{align*}
That completes the proof since any $\gamma=\mu p>0$ can be obtained with $\mu\in(0,1]$ and $p>1$.
\end{proof}

\begin{lemma}
\label{lem2}
Let, for each $n$ and $k$, $\Q^{n,k}$ be an absolutely continuous measure given by $\frac{d \Q^{n,k}}{d \P}=\frac{\id_{C^k_n}}{z^{n,k}_0}$.
Then, for each $\F_{T_n}$-measurable random variable $\beta$ with values in random interval $(0, z^{n,k}_{T_n})$, we have 
$$\Q^{n,k}(I^k_n<\beta|\F_{T_n})=\frac{\beta}{1-\beta}\frac{1-z^{n,k}_{T_n}}{z^{n,k}_{T_n}}$$
where $I^k_n$ is defined in \eqref{I}. 
Or, equivalently 
$$\Q^{n,k}(I^k_n\in d\beta |\F_{T_n})=\frac{1-z^{n,k}_{T_n}}{z^{n,k}_{T_n}}\frac{1}{(1-\beta)^2}\id_{\{0<\beta<z^{n,k}_{T_n}\}}.$$
\end{lemma}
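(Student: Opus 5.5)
Under $\Q^{n,k}$ conditioned on $\F_{T_n}$, the infimum $I^k_n$ is the all-time minimum of a continuous process started at $z:=z^{n,k}_{T_n}$; its law follows from an optional stopping (first-passage) argument. By \eqref{C}, $z^{n,k}$ is a continuous $[0,1]$-valued $\ff$-martingale with $z^{n,k}_\infty=\id_{C^k_n}$. The plan is therefore to compute $\P(C^k_n\cap\{I^k_n<\beta\}\mid\F_{T_n})$ and then change measure.

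\emph{Step 1 (change of measure).} For $A\in\G$, Bayes' formula together with $\E[\id_{C^k_n}\mid\F_{T_n}]=z$ gives
\[
\Q^{n,k}(A\mid\F_{T_n})=\frac{\E[\id_{C^k_n}\id_A\mid\F_{T_n}]}{z} \quad\text{on }\{z>0\}.
\]
This reduces the claim to showing
\[
\E\big[\id_{C^k_n}\id_{\{I^k_n<\beta\}}\mid\F_{T_n}\big]=\frac{\beta(1-z)}{1-\beta}.
\]

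\emph{Step 2 (first passage).} Set $\sigma:=\inf\{t\geq T_n: z^{n,k}_t\leq\beta\}$. Since $\beta$ is $\F_{T_n}$-measurable and $z^{n,k}$ is adapted and continuous, $\sigma$ is a stopping time. Continuity also gives $z^{n,k}_\sigma=\beta$ on $\{\sigma<\infty\}$, because $\beta<z$.

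The events $\{I^k_n<\beta\}$ and $\{\sigma<\infty\}$ differ only on the event where the infimum equals $\beta$ without going below it. I would handle this event by working with $\{I^k_n\leq\beta\}$ and then using that the resulting formula is continuous in $\beta$. Alternatively, one can use that a continuous martingale touching level $\beta\in(0,1)$ a.s. crosses it unless it is constant afterwards; the constant case is impossible, since the terminal value lies in $\{0,1\}$.

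On $C^k_n$ we have $z^{n,k}_\infty=1$, so $\{I^k_n\leq\beta\}\cap C^k_n=\{\sigma<\infty\}\cap C^k_n$.

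\emph{Step 3 (optional stopping).} Apply optional stopping to the bounded martingale $z^{n,k}$ at $\sigma$, conditioned on $\F_{T_n}$, splitting according to whether $\sigma<\infty$:
\[
z=\E[z^{n,k}_\sigma\mid\F_{T_n}]=\beta\,\P(\sigma<\infty\mid\F_{T_n})+\P(C^k_n\cap\{\sigma=\infty\}\mid\F_{T_n}).
\]
A second application, conditioning on $\F_\sigma$ and using $\E[\id_{C^k_n}\mid\F_\sigma]=z^{n,k}_\sigma=\beta$ on $\{\sigma<\infty\}$, gives
\[
\P(C^k_n\cap\{\sigma<\infty\}\mid\F_{T_n})=\beta\,\P(\sigma<\infty\mid\F_{T_n}).
\]
On $\{\sigma=\infty\}$ we have $z^{n,k}_\infty>0$ up to null sets, and since $z^{n,k}_\infty\in\{0,1\}$ this forces $z^{n,k}_\infty=1$, i.e.\ $\{\sigma=\infty\}\subset C^k_n$ a.s. Hence $\P(\sigma<\infty\mid\F_{T_n})=(1-z)/(1-\beta)$. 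Combining,
\[
\P(C^k_n\cap\{I^k_n<\beta\}\mid\F_{T_n})=\frac{\beta(1-z)}{1-\beta}.
\]
Dividing by $z$ as in Step 1 yields the first formula.

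\emph{Step 4 (density).} Differentiating $\beta\mapsto \frac{\beta}{1-\beta}\frac{1-z}{z}$ gives the density $\frac{1-z}{z}(1-\beta)^{-2}$ on $(0,z)$. This is legitimate because the formula holds simultaneously for all deterministic rational $\beta$ (take a countable family and use monotone limits), so the conditional distribution function is identified.

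The main obstacle is the boundary handling in Step 2, i.e.\ the strict versus non-strict inequality at level $\beta$. The fallback is to prove the identity for $\{I^k_n\leq\beta\}$ and pass to $\{I^k_n<\beta\}$ by continuity in $\beta$ of the right-hand side. A second, minor point is justifying optional stopping with a random $\F_{T_n}$-measurable level; this follows from boundedness of $z^{n,k}$.
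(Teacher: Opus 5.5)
Your proposal is correct and follows essentially the paper's argument. Both use the first-passage time $\sigma$ of $z^{n,k}$ to level $\beta$ after $T_n$, the identity $z^{n,k}_\sigma=\beta$ from continuity, conditioning on $\F_\sigma$, the inclusion $(C^k_n)^c\subset\{\sigma<\infty\}$ (from $z^{n,k}_\infty=\id_{C^k_n}$, i.e.\ $\F_\infty$-measurability of the partition), solving the resulting linear relation, and Bayes' rule. The only difference is cosmetic: the paper defines $\sigma$ with the strict inequality $z^{n,k}_t<\beta$, so that $\{\sigma<\infty\}=\{I^k_n<\beta\}$ holds exactly while continuity still gives $z^{n,k}_\sigma=\beta$, which removes the boundary issue you single out as the main obstacle.
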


\begin{proof}
For any $\F_{T_n}$-measurable random variable $\beta$ with values in random interval $(0, z^{n,k}_{T_n})$ we define an $\ff$-stopping time $\sigma_n^\beta$ by 
$\sigma_n^\beta:=\inf\{t\geq T_n: z^{n,k}_t<\beta\}.$
Then we compute
\begin{align*}
\P(\{I^k_n<\beta\}\cap C^k_n|\F_{T_n})&=
\P(\{\sigma_n^\beta< \infty\}\cap C^k_n|\F_{T_n})\\
&=
\E_\P(\P(\{\sigma_n^\beta< \infty\}\cap C^k_n|\F_{\sigma_n^\beta})|\F_{T_n})\\
&=
\E_\P(\id_{\{\sigma_n^\beta< \infty\}}z^{n,k}_{\sigma_n^\beta}|\F_{T_n})
=
\beta \,\P({\{\sigma_n^\beta< \infty\}}|F_{T_n})\\
&=
\beta \,\P({\{I^k_n<\beta\}}\cap C^k_n|F_{T_n})
+\beta \,\P({\{I^k_n<\beta\}}\cap (\Omega \,\backslash C^k_n)|F_{T_n}).
\end{align*}
Since $\{I^k_n<\beta\}\cap (C^k_n)^c=(C^k_n)^c$ we deduce that
$$(1-\beta)\P(\{I^k_n<\beta\}\cap C^k_n|\F_{T_n})=\beta(1-z^{n,k}_{T_n})$$
and therefore, by Bayes' rule, we have 
$$\Q^{n,k}(I^k_n<\beta|\F_{T_n})=\frac{\P(\{I^k_n<b\}\cap C^k_n|\F_{T_n})}{z^{n,k}_{T_n}}=\frac{\beta}{1-\beta}\frac{1-z^{n,k}_{T_n}}{z^{n,k}_{T_n}}$$
which completes the proof.
\end{proof}

\section*{Acknowledgments}
The author acknowledges the generous support of the Australian Research Council through grant DP220103106.


\end{document}